\theoremstyle{plain} 
\newtheorem{theorem}{Theorem}%[section]
\newtheorem{corollary}[theorem]{Corollary}
\newtheorem{lemma}%[theorem]
{Lemma}
\newtheorem{proposition}[theorem]{Proposition}
\theoremstyle{definition} 
\theoremstyle{definition} 
\newtheorem*{ex*}{Example}
\theoremstyle{remark} 
\theoremstyle{remark} 
\newtheorem{remark}[theorem]{Remark}
\newtheorem*{remark*}{Remark}
\newcommand{\Ga}{\Gamma}
\newcommand{\la}{\lambda}
\newcommand{\ga}{\gamma}
\newcommand{\de}{\delta}
\newcommand{\be}{\beta}
\renewcommand{\th}{\theta}
\newcommand{\ii}[1]{\,\mathbf{I}\{#1\}} 
\renewcommand{\ii}{\operatorname{I}}
\renewcommand{\P}{\operatorname{\mathsf{P}}} 
\newcommand{\E}{\operatorname{\mathsf{E}}}
\newcommand{\vp}{\varepsilon}
\renewcommand{\le}{\leqslant}
\renewcommand{\ge}{\geqslant}
\journal{Statistics and Probability Letters}
\begin{document}

\begin{frontmatter}

%% Title, authors and addresses

%% use the tnoteref command within \title for footnotes;
%% use the tnotetext command for the associated footnote;
%% use the fnref command within \author or \address for footnotes;
%% use the fntext command for the associated footnote;
%% use the corref command within \author for corresponding author footnotes;
%% use the cortext command for the associated footnote;
%% use the ead command for the email address,
%% and the form \ead[url] for the home page:
%%
\title{Monotonicity properties of the gamma family of distributions}
%On the characteristic functions of the positive part and absolute value of a random variable
%\tnoteref{label1}
%
%\tnotetext[label1]{Supported by NSA grant H98230-12-1-0237}
%% \author{Name\corref{cor1}\fnref{label2}}
%% \ead{email address}
%% \ead[url]{home page}
%% \fntext[label2]{}
%% \cortext[cor1]{}
%% \address{Address\fnref{label3}}
%% \fntext[label3]{}

%\title{}

%% use optional labels to link authors explicitly to addresses:
%% \author[label1,label2]{<author name>}
%% \address[label1]{<address>}
%% \address[label2]{<address>}

\author{Iosif Pinelis}

\address{Department of Mathematical Sciences\\
Michigan Technological University\\
Houghton, Michigan 49931, USA\\
E-mail: ipinelis@mtu.edu}

\begin{abstract}
For real $a>0$, let $X_a$ denote a random variable with the gamma distribution with parameters $a$ and $1$. Then $\P(X_a-a>c)$ is increasing in $a$ for each real $c\ge0$; non-increasing in $a$ for each real $c\le-1/3$; and non-monotonic in $a$ for each $c\in(-1/3,0)$. This extends and/or refines certain previously established results.  
\end{abstract}

\begin{keyword}
%% keywords here, in the form: keyword \sep keyword
%IP12-04-20
stochastic 
monotonicity \sep gamma distribution \sep incomplete gamma function \sep logarithmic mean

%% MSC codes here, in the form: \MSC code \sep code
%% or \MSC[2008] code \sep code (2000 is the default)
\MSC[2010]	26D15, 33B20, 60E15, 62E15
% 	62E15   	Exact distribution theory

%33B20  	Incomplete beta and gamma functions (error functions, probability integral, Fresnel integrals)
%	26D15  	Inequalities for sums, series and integrals

\end{keyword}

\end{frontmatter}

% \linenumbers

%https://mathoverflow.net/questions/373943/is-gammas-x-s-1-gammas-decreasing-for-real-s1-is-gammas-x-s-g/373956#373956

%$s\to a,\quad a\to c$

\section{Summary and discussion}\label{intro}
For any real $a>0$, let $X_a$ denote a random variable with the gamma distribution with parameters $a$ and $1$, so that for any real $c>-a$ 
\begin{equation*}
	\P(X_a-a>c)=\dfrac{\Ga(a,a+c)}{\Ga(a)}, 
\end{equation*}
where 
\begin{equation}\label{eq:Ga}
	\Ga(a,x)=\int_x^\infty t^{a-1} e^{-t}\,dt 
\end{equation}
for real $x>0$; expression \eqref{eq:Ga} defines the incomplete gamma function. 

There are quite a few bounds on the incomplete gamma function in the literature; see e.g.\ \cite{alzer97,incompl-gamma} and references therein. 
%including papers
%\cite{gautschi59,temme79,alzer97,qi99,nat-pal00,paris02,qi02,laforgia-nat06,borwein-chan,alzer-baricz12,neuman13,yang-zhang-chu17,greengard-rokhlin19,incompl-gamma}. 

%!!! $X_a$
%
%!!! incomplete gamma, bounds -- use refs. from the the previous "incomplete gamma" paper

The main result of the present paper is 

\begin{theorem}\label{th:1}
The probability $\P(X_a-a>c)$ is 
%$\dfrac{\Ga(s,s+a)}{\Ga(s)}$ is
\emph{
\begin{enumerate}[(I)]
	\item \emph{increasing in real $a>0$ for each real $c\ge0$;}
	\item \emph{decreasing in real $a>-c$ for each real $c\le-1/3$;} 
	\item \emph{non-monotonic in real $a>-c$ for each $c\in(-1/3,0)$.} 
\end{enumerate}
}
\end{theorem}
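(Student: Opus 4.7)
My plan is to derive an explicit formula for $g'(a)$ (where $g(a):=\P(X_a>b)$, $b:=a+c$) and analyze its sign. Differentiating $g(a)=\Gamma(a,b)/\Gamma(a)$ in $a$ (remembering that $b$ depends on $a$) and noting that the resulting double integral over $\{t,s>b\}$ vanishes by the swap-antisymmetry of $\ln(t/s)$, one arrives at the clean probabilistic identity
\[
g'(a) \;=\; \E\bigl[\,\mathbf{1}\{X_a<b<Y_a\}\,\ln(Y_a/X_a)\,\bigr] \;-\; f_a(b),
\]
where $X_a,Y_a$ are independent gamma$(a,1)$ variables and $f_a$ is the gamma density. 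The representation $\ln(Y/X)=(Y-X)/L(X,Y)$, with $L$ the logarithmic mean satisfying $\sqrt{XY}\le L(X,Y)\le(X+Y)/2$, explains the paper's keyword and supplies the natural tools for the sign analysis.

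\textbf{Part (III)} is purely asymptotic. For $c<0$, letting $a\downarrow -c$ sends $b\downarrow 0^+$, so $g(a)\to\P(X_a>0)=1$. Using the skewness $2/\sqrt{a}$ of the gamma, the standard Edgeworth expansion gives
\[
g(a) \;=\; \tfrac12 \;-\; \frac{c+1/3}{\sqrt{2\pi a}} \;+\; o(a^{-1/2})\qquad(a\to\infty).
\]
For $c\in(-1/3,0)$ this forces $g(a)<1/2$ for all large $a$, while $g$ takes values near $1$ for $a$ close to $-c$, ruling out monotonicity by the intermediate-value argument.

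\textbf{Parts (I) and (II)} demand the sign of $g'$ at every admissible $a$. For (I), $c\ge 0$ places $b$ at or past the mode of the gamma; I expect the inequality $\E[\mathbf{1}\{X_a<b<Y_a\}\ln(Y_a/X_a)]>f_a(b)$ to follow from pairing the bound $L\le(X+Y)/2$ with direct estimates on the truncated moments involved. For (II), the threshold $c=-1/3$ is sharp, so crude bounds on $L$ cannot succeed; the plan is to expand $\ln(Y/X)$ around the common mean $a$, harness the cancellations $\E X_a=a$ and $\E\ln X_a=\psi(a)$, and handle the residual inequality via monotonicity in $a$. The main obstacle is the boundary case $c=-1/3$: there the leading-order Edgeworth coefficient vanishes, so negativity of $g'(a)$ must be secured by a second-order analysis uniform in $a>1/3$, likely combining large-$a$ asymptotics with a direct argument near $a\downarrow 1/3$, perhaps exploiting the recurrence $\Gamma(a+1,x)=a\Gamma(a,x)+x^ae^{-x}$ to reduce to a one-parameter inequality.
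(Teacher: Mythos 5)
Your starting identity is correct and attractive: differentiating $g(a)=\Gamma(a,a+c)/\Gamma(a)$ and killing the symmetric part of the double integral does give
$g'(a)=\E\bigl[\mathbf{1}\{X_a<b<Y_a\}\ln(Y_a/X_a)\bigr]-f_a(b)$,
and this is a legitimate alternative to the paper's reformulation of $p(a)$ as $1/(1+R(a-1))$ with $R$ a ratio of integrals over the two branches of $xe^{1-x}=z$. But for parts (I) and (II) the proposal stops exactly where the theorem begins. You write that you ``expect'' the inequality for (I) to follow from $L\le(X+Y)/2$ plus truncated-moment estimates, and for (II) that ``the plan is'' to expand $\ln(Y_a/X_a)$ around $a$ and handle the residual ``via monotonicity in $a$,'' possibly with large-$a$ asymptotics plus a separate argument near $a\downarrow 1/3$. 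None of this is carried out, and it is precisely the hard content: part (II) asserts a sign for \emph{every} $a>-c$ with the sharp constant $-1/3$, so no asymptotic (Edgeworth-type) analysis can suffice, and, as you yourself note, crude bounds on the logarithmic mean cannot either. The paper closes this gap with a genuinely nontrivial mechanism: the sign of the relevant derivative is reduced to the sign of $m_c=1-x_1x_2+c(1-x_1)(x_2-1)$ where $L(x_1,x_2)=1$, and the equivalence ``$m_c<0$ on $(0,1)$ iff $c\le-1/3$'' (Lemmas~\ref{lem:1} and~\ref{lem:2}) is proved by showing a certain logarithmic-mean ratio $\la(y)$ is increasing from $\la(1+)=-1/3$, via three applications of an l'Hospital-type monotonicity rule. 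Your proposal contains no substitute for this step, so (I) and (II) remain unproved.

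Part (III) is closer to complete, and its spirit matches the paper's (the paper shows $p((-c)+)=1$ and then $p(a+1)>p(a)$ for large $a$ by an explicit expansion, rather than citing an Edgeworth expansion). However, your stated deduction has a logical slip: ``$g$ near $1$ for $a$ close to $-c$'' together with ``$g(a)<1/2$ for all large $a$'' does \emph{not} rule out monotonicity --- a decreasing function does exactly that. What saves you is the full content of your own expansion: since $c+1/3>0$, it gives simultaneously $g(a)<1/2$ for large $a$ \emph{and} $g(a)\to 1/2$, so $g$ cannot be non-increasing (otherwise its limit would be $\le g(a_0)<1/2$), and it is not non-decreasing because of the values near $1$ at the left end. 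You should state that step explicitly, and also justify the validity of the one-term Edgeworth correction for the gamma family with continuous parameter $a$ (or replace it, as the paper does, by a direct comparison of $p(a+1)$ with $p(a)$ obtained from integration by parts and an elementary expansion).
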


The terms ``increasing'' and ``decreasing'' are understood in this note in the strict sense, as ``strictly increasing'' and ``strictly decreasing''. 

\begin{remark}\label{rem:a<-c}
Since $\P(X_a-a>c)=1$ for $a\in(0,-c]$, in parts (II) and (III) of Theorem~\ref{th:1} one may replace the condition $a>-c$ by $a>0$, albeit for the price of replacing ``decreasing'' in part (II) by ``non-increasing''. 
\end{remark}

\begin{corollary}\label{cor:1}
For all real $a>0$
%For each real $c\notin(-1/3,0)$ and each real $a>\max(0,-c)$, 
\begin{equation*}
	\P(X_a-a>0)<1/2<\P(X_a-a>-1/3).
\end{equation*} 
%	\item $\P(X_a-a>0)<1/2$ for all real $a>0$;
%	\item $\P(X_a-a>-1/3)>1/2$ for all real $a>1/3$. 
%\end{enumerate}
\end{corollary}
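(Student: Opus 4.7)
The plan is to combine the monotonicity given by Theorem~\ref{th:1} with the asymptotic behavior as $a\to\infty$ supplied by the central limit theorem, since each of the two functions $a\mapsto\P(X_a-a>0)$ and $a\mapsto\P(X_a-a>-1/3)$ is (eventually) strictly monotone and so is bounded between its endpoint values.

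First I would treat the upper bound. By part (I) of Theorem~\ref{th:1} with $c=0$, the map $a\mapsto\P(X_a-a>0)$ is strictly increasing on $(0,\infty)$, so it is strictly less than its limit at $\infty$. Since $X_a$ has mean $a$ and variance $a$, the standardized variable $(X_a-a)/\sqrt{a}$ converges in distribution to a standard normal variable $Z$ as $a\to\infty$; because $0$ is a continuity point of the law of $Z$, this yields $\P(X_a-a>0)=\P((X_a-a)/\sqrt{a}>0)\to\P(Z>0)=1/2$. The strict inequality $\P(X_a-a>0)<1/2$ follows.

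Next I would treat the lower bound. For $a\in(0,1/3]$ one has $-1/3\le-a$, so by Remark~\ref{rem:a<-c} (or directly, since $X_a\ge0$ a.s.) $\P(X_a-a>-1/3)=1>1/2$. For $a>1/3$, part (II) of Theorem~\ref{th:1} with $c=-1/3$ shows that $a\mapsto\P(X_a-a>-1/3)$ is strictly decreasing, hence strictly greater than its limit at $\infty$. That limit equals $1/2$ by the same CLT argument:
\[
\P(X_a-a>-1/3)=\P\bigl((X_a-a)/\sqrt{a}>-1/(3\sqrt{a})\bigr)\longrightarrow\P(Z>0)=1/2.
\]
Combining the two cases gives $\P(X_a-a>-1/3)>1/2$ for all $a>0$.

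There is no real obstacle; the only thing to watch is that the monotonicity in part (II) is only asserted for $a>-c=1/3$, which is handled by the separate (trivial) consideration of $a\in(0,1/3]$ via Remark~\ref{rem:a<-c}.
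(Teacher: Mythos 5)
Your proposal is correct and follows essentially the same route as the paper: combine the strict monotonicity from parts (I) and (II) of Theorem~\ref{th:1} with the central limit theorem limit $\P\big(X_a>a+o(\sqrt a\,)\big)\to1/2$; your extra care about $a\in(0,1/3]$ via Remark~\ref{rem:a<-c} is a harmless elaboration of the same argument.
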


This immediately follows from parts (I) and (II) of Theorem~\ref{th:1} -- because, by the central limit theorem, $\P\big(X_a>a+o(\sqrt a\,)\big)\to1/2$ as $a\to\infty$. In turn, Corollary~\ref{cor:1} immediately implies 

\begin{corollary}\label{cor:2}
For each real $a>0$, the median of $X_a-a$ is in the interval $(-1/3,0)$. 
\end{corollary}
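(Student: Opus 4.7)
The plan is to deduce Corollary~\ref{cor:2} in a single step from Corollary~\ref{cor:1}, using the fact that $X_a-a$ is an absolutely continuous random variable so that its median is uniquely determined by the survival function. Let $m_a$ denote the median of $X_a-a$. Since $X_a$ has a strictly positive density on $(0,\infty)$, the survival function $g(c):=\P(X_a-a>c)$ is continuous and strictly decreasing on $(-a,\infty)$, tends to $1$ as $c\downarrow -a$, and tends to $0$ as $c\to\infty$. Hence there is a unique real number $m_a\in(-a,\infty)$ with $g(m_a)=1/2$, and this $m_a$ is the median.

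It remains to sandwich $m_a$ using Corollary~\ref{cor:1}. The inequality $g(0)=\P(X_a-a>0)<1/2=g(m_a)$, combined with strict monotonicity of $g$, forces $m_a<0$. For the lower bound, if $a>1/3$ then $-1/3\in(-a,\infty)$ and the inequality $g(-1/3)=\P(X_a-a>-1/3)>1/2=g(m_a)$ gives $m_a>-1/3$ by the same token. If instead $a\le 1/3$, then $-1/3\le -a$, so $-1/3$ lies outside the support of $X_a-a$ and the bound $m_a>-a\ge -1/3$ is trivial. Either way, $m_a\in(-1/3,0)$, as claimed.

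There is no real obstacle here: the entire content of the corollary is already packaged into Corollary~\ref{cor:1}, and the translation from survival-function bounds to a median bound is a standard, essentially tautological application of monotonicity. The only point worth mentioning in passing is the mild case distinction for $a\le 1/3$, where $-1/3$ may fall outside the effective range $(-a,\infty)$ of the survival function.
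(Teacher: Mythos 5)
Your proof is correct and is essentially the paper's own argument: the paper simply notes that Corollary~\ref{cor:1} immediately implies Corollary~\ref{cor:2}, and your write-up just makes explicit the standard step (strict monotonicity and continuity of the survival function pinning down a unique median), including the harmless case $a\le 1/3$.
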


Part (I) of Theorem~\ref{th:1} was previously obtained in  
\cite{chojnacki}, where it was proved by a quite different method, which does not appear to be working for $c<0$. 

Corollary~\ref{cor:2} was previously given in \cite{chen-rubin}. Refinements of this result -- but only for the natural values of $a$ -- were obtained in \cite{adell-jodra,alm,alzer05,choi}. 

Corollary~\ref{cor:1} improves and generalizes the main result of \cite{vietoris}, that \break  $\P(X_n-n>0)<1/2<\P(X_n-n>-1)$ for natural $n$. 

%!!! state a proposition: for $n\ge2$ the inequality $$\frac{\Ga(n,n-1)}{\Ga(n)}<\frac{\Ga(n-1,n-2)}{\Ga(n-1)}$$ 

\bigskip

%IP12-04-20
As usual for results on stochastic monotonicity (cf.\ e.g.\ \cite[Section~4]{anderson-samuels67} and \cite[Section~4]{hoeff56}), a straightforward application of Theorem~\ref{th:1} is to statistical testing, as follows. A sample $Y$ is taken from the centered gamma distribution with shape parameter $\th>0$ and scale parameter $1$. We test the null hypothesis $H_0\colon\th=\th_0$ (for some given real $\th_0>0$) versus the alternative hypothesis $H_1\colon\th>\th_0$, using the test $\de(Y):=\ii\{Y>c\}$ with a real critical value $c>0$, where $\ii\{\cdot\}$ denotes the indicator function. Then, according to part (I) of Theorem~\ref{th:1}, the power function $\be_\de$ of the test, given by the formula $\be_\de(\th):=\E_\th\de(Y)=\P_\th(Y>c)=\P(X_\th-\th>c)$ for all real $\th>0$, will be increasing. In particular, it follows that the test $\de$ is unbiased. Part (II) of Theorem~\ref{th:1} can be used 
similarly. 

\section{Proof of Theorem~\ref{th:1}} \label{proof}

\subsection{Proof of part (I) of Theorem~\ref{th:1} (and of part (II) concerning $c\le-1$)} \label{part I}

%\begin{proof}[Proof of Theorem~\ref{th:1}] 
Take any real $c$ and any real $a>0\vee(-c)$. Then 
\begin{equation}\label{eq:p(a)}
p(a):=p_c(a):=	\P(X_a-a>c)=\frac{\Ga(a,a+c)}{\Ga(a)}=1/\Big(1+\frac{\ga(a,a+c)}{\Ga(a,a+c)}\Big),
\end{equation}
where $\ga(a,a+c):=\Ga(a)-\Ga(a,a+c)$. Note that 
\begin{align*}
	\Ga(a,a+c)&=\int_{a+c}^\infty t^{a-1}e^{-t}\,dt=(a+c)^a\int_1^\infty x^{a-1}e^{-(a+c)x}\,dx, \\
		\ga(a,a+c)&=\int_0^{a+c} t^{a-1}e^{-t}\,dt=(a+c)^a\int_0^1 x^{a-1}e^{-(a+c)x}\,dx. 
\end{align*}
So,
\begin{equation}\label{eq:P=}
	\P(X_a-a>c)=\frac1{1+R(a-1)},
\end{equation}
where
\begin{equation*}%\label{eq:R:=}
	R(u):=\frac{I(u)}{J(u)},
\end{equation*}

\begin{align*}
	I(u)&:=\int_0^1 f(x)^u e^{-(1+c)x}\,dx=\int_0^1 z^u\, p(z)\,dz, \\
	J(u)&:=\int_1^\infty f(x)^u e^{-(1+c)x}\,dx=\int_0^1 z^u\,q(z)\,dz, 
\end{align*}
\begin{equation*}
	f(x):=xe^{1-x},
\end{equation*}
\begin{equation*}
	p(z):=e^{-(1+c)x_1(z)}x_1'(z)>0,\quad q(z):=-e^{-(1+c)x_2(z)}x_2'(z)>0, 
\end{equation*}
$x_1(z)$ is the only root $x\in(0,1)$ of the equation $f(x)=z$ for $z\in(0,1)$, and $x_2(z)$ is the only root $x\in(1,\infty)$ of the equation $f(x)=z$ for $z\in(0,1)$. 
One might note that for $z\in(0,1)$ we have 
$x_1(z)=-W\left(-z/e\right)$ and $x_2(z)=-W_{-1}\left(-z/e\right)$, where $W$ denotes the principal branch of Lambert's function and $W_{-1}$ denotes its $(-1)$ branch -- see e.g.\ \cite[pages~330--331]{knuth96}. 

It follows that 
\begin{equation}\label{eq:R'=}
\begin{aligned}
	2J(u)^2R'(u)&=2\int_0^1\int_0^1 dx\,dy\,(xy)^u p(x)q(y)(\ln x-\ln y) \\
	&=2\int_0^1\int_0^1 dy\,dx\,(yx)^u p(y)q(x)(\ln y-\ln x) \\
	&=\int_0^1\int_0^1 dy\,dx\,(xy)^u [p(x)q(y)-p(y)q(x)] (\ln x-\ln y)\\
	&=\int_0^1\int_0^1 dy\,dx\,(xy)^u\,p(y)q(y) [r(x)-r(y)](\ln x-\ln y), 
\end{aligned}	
\end{equation}
 where 
\begin{equation*}
	r:=p/q. 
\end{equation*}

Differentiating the identities $f(x_j(z))\equiv z$ for $j=1,2$ in $z\in(0,1)$, we have 
\begin{equation*}
	x_j'(z)=\frac{x_j(z)}{(1-x_j(z)) z}, 
\end{equation*}
which implies 
\begin{equation*}
r'=-Am_c,  
\end{equation*}
where
\begin{equation*}
	A:=A(z):=\frac{e^{(c+1) (x_2-x_1)}(x_2-x_1)x_1}{(1-x_1){}^3 (x_2-1) x_2 z}\in(0,\infty)
\end{equation*}
and 
\begin{equation}\label{eq:m_c=}
\begin{aligned}
	m_c:=m_c(z)&:=c (x_1+x_2-2)+(c+1) (1-x_1 x_2) \\ 
	&=1-x_1 x_2+c (1-x_1)(x_2-1); 
\end{aligned}	
\end{equation}
here we write $x_1$ and $x_2$ in place of $x_1(z)$ and $x_2(z)$, for brevity.

Further, the condition $f(x_1)=f(x_2)$ means that the logarithmic mean of $x_1$ and $x_2$ is $1$; recall that the logarithmic mean of two distinct positive real numbers $x$ and $y$ is defined by the formula 
\begin{equation*}
L(x,y):=\frac{y-x}{\ln y-\ln x}.	
\end{equation*}
Then the arithmetic-logarithmic-geometric mean inequality (see e.g.\ \cite[formula~(4)]{lin74} yields $\sqrt{x_1x_2}<1<(x_1+x_2)/2$, so that 
$%\begin{equation}
	x_1+x_2-2>0\quad\text{and}\quad 1-x_1 x_2>0. 
$ %\end{equation}
So, $m_c>0$ if $c\ge0$ and $m_c<0$ if $c\le-1$. Since the sign of $r'$ is opposite to that of $m_c$, we see that the function $r$ is decreasing (on $(0,1)$) if $c\ge0$ and increasing if $c\le-1$. Therefore, by \eqref{eq:R'=}, (i) $R'<0$ and hence $R$ is decreasing if $c\ge0$ and (ii) $R'>0$ and hence $R$ is increasing if $c\le-1$. 

Now part (I) of Theorem~\ref{th:1} follows by \eqref{eq:P=} (as well as part (II) concerning $c\le-1$).  

\subsection{Proof of part (II) of Theorem~\ref{th:1}} \label{part II}
Now it is also seen that, to complete the proof of part (II) of Theorem~\ref{th:1}, it suffices to prove Lemma~\ref{lem:1} below; in fact, only the implication (v)$\implies$(i) in Lemma~\ref{lem:1} will be needed for this purpose. 

\begin{lemma}\label{lem:1} Take any real $c$. 
The following statements are equivalent to one another: 
\begin{enumerate}[(i)]
	\item $m_c<0$ on $(0,1)$; 
	\item $c<\dfrac{x_1x_2-1}{(1-x_1)(x_2-1)}$ on $(0,1)$; 
	\item $c<\dfrac{xy-L(x,y)^2}{(L(x,y)-x)(y-L(x,y))}$ whenever $0<x<y<\infty$; 
	\item $c<\la(y):=\dfrac{y-l(y)^2}{(l(y)-1)(y-l(y))}$ for all real $y>1$, where $l(y):=L(1,y)$; 
	\item $c\le-1/3$. 
\end{enumerate}
\end{lemma}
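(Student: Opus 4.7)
The plan is to chain the equivalences (i) $\Leftrightarrow$ (ii) $\Leftrightarrow$ (iii) $\Leftrightarrow$ (iv) $\Leftrightarrow$ (v), with essentially all the work in the last step. (i) $\Leftrightarrow$ (ii) is immediate upon dividing the inequality $m_c < 0$ by the strictly positive factor $(1-x_1)(x_2-1)$. For (ii) $\Leftrightarrow$ (iii) $\Leftrightarrow$ (iv), the right-hand side of (iii) is invariant under the rescaling $(x, y) \mapsto (\alpha x, \alpha y)$ with $\alpha > 0$ (since $L$ is homogeneous of degree one); normalizing first to $L(x,y) = 1$ recovers (ii), and a further normalization to $x = 1$ recovers (iv).

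The crux is (iv) $\Leftrightarrow$ (v), which amounts to showing $\inf_{y > 1}\lambda(y) = -1/3$ with the infimum not attained. The direction ``not (v) $\Rightarrow$ not (iv)'' will follow from the limit $\lim_{y \to 1^+}\lambda(y) = -1/3$: I would verify this by Taylor-expanding in $u := \ln y \to 0^+$, using $l = 1 + u/2 + u^2/6 + O(u^3)$ to get $y - l^2 = -u^2/12 + O(u^3)$ and $(l-1)(y-l) = u^2/4 + O(u^3)$, which yields the ratio $-1/3$ in the limit.

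The other direction requires $\lambda(y) > -1/3$ for every $y > 1$. Since $(l-1)(y-l) > 0$, this is equivalent to $3(y-l^2) + (l-1)(y-l) > 0$, i.e., $2y + ly + l - 4l^2 > 0$. Under the substitution $y = e^{2v}$ with $v > 0$, one has $l = e^v\sinh(v)/v$, and after factoring out the positive quantity $e^{2v}/v^2$ the inequality reduces to
\[
K(v) := v\sinh(2v) + 2v^2 - 4\sinh^2(v) > 0 \qquad \text{for all } v > 0.
\]
The clean way to prove this is via a power series. Using $v\sinh(2v) = \sum_{n \ge 1} 2^{2n-1} v^{2n}/(2n-1)!$ and $4\sinh^2(v) = 2\cosh(2v) - 2 = \sum_{n \ge 1} 2^{2n+1} v^{2n}/(2n)!$, the coefficient of $v^{2n}$ in $v\sinh(2v) - 4\sinh^2(v)$ simplifies to $4^n(n-2)/(2n)!$. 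Adding $2v^2$ exactly cancels the $n = 1$ contribution, the $n = 2$ term already vanishes, and all remaining terms are strictly positive:
\[
K(v) = \sum_{n \ge 3}\frac{4^n(n-2)}{(2n)!}\,v^{2n} > 0.
\]

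The main obstacle is the positivity of $K$: a direct calculus-based attack is hampered by the fact that $K$ vanishes to order six at $v = 0$, so monotonicity or convexity arguments on $K$ or its derivatives become awkward. The substitution $y = e^{2v}$ is what makes the cancellations structurally transparent and exposes the manifestly positive power-series representation above.
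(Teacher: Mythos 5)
Your proof is correct, but at the crucial step it takes a genuinely different route from the paper's. The chain (i)$\iff$(ii)$\iff$(iii)$\iff$(iv) is essentially the paper's argument (the paper makes explicit the one point you leave implicit: that the pairs $(x,y)$ with $0<x<y$ and $L(x,y)=1$ are exactly the pairs $(x_1(z),x_2(z))$, $z\in(0,1)$, obtained via $f(x/b)=f(y/b)$ with $b:=L(x,y)$). For (iv)$\iff$(v) the paper proves more than is needed: its Lemma~\ref{lem:2} shows that $\lambda$ is \emph{increasing} on $(1,\infty)$ with $\lambda(1+)=-1/3$, by three nested applications of the special l'Hospital-type rule for monotonicity (Proposition~\ref{prop:}) together with the l'Hospital rule for limits. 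You instead prove exactly what the equivalence requires, namely $\lambda(y)>-1/3$ for all $y>1$ together with $\lambda(1+)=-1/3$: clearing the positive denominator $(l-1)(y-l)$ gives $2y+ly+l-4l^2>0$, and the substitution $y=e^{2v}$, $l=e^v\sinh(v)/v$, reduces this (after factoring out $e^{2v}/v^2$) to the positivity of $K(v)=v\sinh(2v)+2v^2-4\sinh^2(v)$, which your series computation establishes cleanly -- the coefficient of $v^{2n}$ in $v\sinh(2v)-4\sinh^2(v)$ is indeed $4^n(n-2)/(2n)!$, the $n=1$ term is cancelled by $2v^2$, the $n=2$ term vanishes, and all higher terms are positive; the limit $\lambda(1+)=-1/3$ follows from your Taylor expansion of $l=(y-1)/\ln y$, and both computations check out. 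Your route is more elementary and self-contained (no monotone-form l'Hospital rule is invoked), at the price of not yielding the monotonicity of $\lambda$, which the paper's argument delivers as a by-product but which is not needed for Lemma~\ref{lem:1} or for part (II) of Theorem~\ref{th:1}.
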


\begin{proof}[Proof of Lemma~\ref{lem:1}]
The equivalence (i)$\iff$(ii) follows immediately from \eqref{eq:m_c=}. The implication (iii)$\implies$(ii) holds because $L(x_1,x_2)=1$, as was noted before. 

To prove the implication (ii)$\implies$(iii), take any $x$ and $y$ such that $0<x<y<\infty$. Let $b:=L(x,y)$. Then $x/b\in(0,1)$, $y/b\in(1,\infty)$, and $f(x/b)=f(y/b)=:z_*$. Then $x/b=x_1(z_*)$ and  $y/b=x_2(z_*)$. So, (ii) will imply 
\begin{equation*}
	c<\dfrac{(x/b)(y/b)-1}{(1-(x/b))((y/b)-1)}=\dfrac{xy-L(x,y)^2}{(L(x,y)-x)(y-L(x,y))}. 
\end{equation*}
This proves the implication (ii)$\implies$(iii). 

The equivalence (iii)$\iff$(iv) follows immediately by homogeneity. 

The remaining equivalence (iv)$\iff$(v) holds by the following lemma. 
\end{proof}

\begin{lemma}\label{lem:2} The function $\la$ defined in Lemma~\ref{lem:1} is increasing on $(1,\infty)$, from $\la(1+)=-1/3$. 
\end{lemma}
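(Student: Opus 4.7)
My plan is to change variables so as to reduce the monotonicity of $\la$ to a one-parameter inequality provable by Taylor expansion. First I will introduce $\eta := (l-1)/l$ and $\xi := (y-l)/l$, so that $l = 1/(1-\eta)$ and $y = (1+\xi)/(1-\eta)$. A direct calculation yields
$$\la(y) = \frac{1}{\eta} - \frac{1}{\xi} - 1,$$
while the defining identity $l\ln y = y-1$ translates into the single constraint $(1-\eta)e^{\eta} = (1+\xi)e^{-\xi}$, equivalently $\eta+\xi = \ln\frac{1+\xi}{1-\eta}$. As $y$ increases over $(1,\infty)$, both $\eta\in(0,1)$ and $\xi\in(0,\infty)$ strictly increase, so it suffices to check $d\la/d\eta > 0$.

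Implicit differentiation of the constraint gives $d\xi/d\eta = \eta(1+\xi)/[\xi(1-\eta)]$, and substituting shows that $d\la/d\eta$ has the same sign as $\eta^3(1+\xi) - \xi^3(1-\eta)$. Using $(1+\xi)/(1-\eta) = e^{T}$ with $T := \eta+\xi$, this sign is that of $e^T - R^3$, hence of $e^{T/3} - R$, where $R := \xi/\eta$. Next I will eliminate one of the two variables: writing $\eta = T/(1+R)$ and $\xi = TR/(1+R)$, the constraint becomes a linear equation in $R$ with unique solution
$$R(T) = \frac{(T-1)e^T + 1}{e^T - 1 - T}.$$
So the monotonicity claim reduces to $\Phi(T) > 0$ for all $T > 0$, where
$$\Phi(T) := e^{T/3}\bigl(e^T-1-T\bigr) - \bigl((T-1)e^T+1\bigr) = e^{4T/3} - (1+T)e^{T/3} - (T-1)e^{T} - 1.$$

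The remaining task, and what I expect to be the main obstacle, is to establish $\Phi(T) > 0$ on $(0,\infty)$, which I will do by Taylor expansion. A short computation gives
$$\Phi^{(k)}(0) = \Bigl(\tfrac{4}{3}\Bigr)^k - \frac{1+3k}{3^k} - (k-1)\qquad (k\ge 1).$$
Direct evaluation shows this vanishes for $k\in\{1,2,3,4\}$ and is positive for $k\ge 5$; the latter is equivalent to $(4/3)^k - (k-1) > (3k+1)/3^k$, whose left-hand side is increasing in $k$ for $k\ge 4$ (first difference $(4/3)^k/3 - 1 \ge 0$) while the right-hand side is decreasing, with the base case $k=5$ reducing to $52/243 > 16/243$. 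Since $\Phi$ is entire with all Taylor coefficients at $0$ nonnegative, $\Phi(T) = \sum_{k\ge 5}\Phi^{(k)}(0)T^k/k! > 0$ for $T > 0$, proving monotonicity. The boundary value $\la(1+) = -1/3$ will then follow from the expansion $\xi = \eta + \tfrac{2}{3}\eta^2 + O(\eta^3)$ as $\eta\to 0^+$ given by the constraint, which yields $\la = (\xi-\eta-\eta\xi)/(\eta\xi) \to \tfrac{2}{3} - 1 = -\tfrac{1}{3}$.
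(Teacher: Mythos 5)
Your proof is correct, and it takes a genuinely different route from the paper's. You substitute $\eta=(l-1)/l$, $\xi=(y-l)/l$, turning the definition of $l=L(1,y)$ into the constraint $\eta+\xi=\ln\frac{1+\xi}{1-\eta}$ and $\la$ into $\frac1\eta-\frac1\xi-1$ (both identities check out); implicit differentiation and the further substitution $T=\eta+\xi$, $R=\xi/\eta$ then reduce the monotonicity to the explicit one-variable inequality $\Phi(T)=e^{4T/3}-(1+T)e^{T/3}-(T-1)e^T-1>0$ for $T>0$, which you certify by showing that all Taylor coefficients of $\Phi$ at $0$ are nonnegative (they vanish for $k\le4$ and are positive for $k\ge5$; your monotone-comparison argument for $k\ge5$ and the value $36/243$ at $k=5$ are correct, as is the limit computation via $\xi=\eta+\frac23\eta^2+O(\eta^3)$). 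The paper instead writes $\la=f/g$ with $f(1+)=g(1+)=0$ and applies its special l'Hospital-type rule for monotonicity (Proposition~\ref{prop:}) three times, finishing with the l'Hospital rule for limits to get $\la(1+)=-1/3$. The paper's route is shorter and leans on a ready-made tool; yours is self-contained and elementary, ends with a clean positivity certificate (an entire function with nonnegative power-series coefficients), and makes visible why the constant $-1/3$ is sharp: $\Phi$ vanishes to order $5$ at $T=0$, so the margin degenerates as $y\to1+$. Two minor points worth tightening: the passage from $d\la/d\eta>0$ to monotonicity in $y$ uses that $\eta$ is a strictly increasing differentiable function of $y$, which holds since $l'(y)=(\ln y-1+1/y)/\ln^2y>0$ for $y>1$; and the expansion $\xi=\eta+\frac23\eta^2+O(\eta^3)$ deserves a line of justification, because the implicit function theorem is degenerate at $(\eta,\xi)=(0,0)$ --- one should first deduce $\xi/\eta\to1$ from $\xi^2-\eta^2=\frac23(\xi^3+\eta^3)+O\big((\eta+\xi)^4\big)$ and then refine (alternatively, $\la(1+)=-1/3$ follows from a direct expansion of $\la(y)$ in powers of $y-1$).
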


The proof Lemma~\ref{lem:2} is based on what was referred to as special l'Hospital-type rule for monotonicity: 

\begin{proposition}\label{prop:}
\emph{[See e.g.\ \cite[Proposition~4.1]{pin06}.]}
Suppose that $-\infty\le A<B\le\infty$. 
Let $f$ and $g$ be differentiable functions defined on the interval $(A,B)$ such that $f(A+)=g(A+)=0$. Suppose further that $g$ and $g'$ do not take on the zero value and do not change their respective signs on $(A,B)$. Finally, suppose that the ``derivative ratio'' $f'/g'$ is increasing on $(A,B)$. Then the ratio $f/g$ is also increasing on $(A,B)$. 
\end{proposition}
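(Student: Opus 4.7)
The plan splits into two parts: first establish the boundary value $\la(1+) = -1/3$, then the monotonicity $\la'(y) > 0$ on $(1, \infty)$. For the limit, I would Taylor-expand $l(y) = (y-1)/\ln y$ around $y = 1$ to get $l(y) = 1 + u/2 - u^2/12 + O(u^3)$ with $u := y - 1$ (using $\ln(1+u) = u - u^2/2 + u^3/3 - \cdots$). Substituting, one finds $y - l(y)^2 = -u^2/12 + O(u^3)$ and $(l(y) - 1)(y - l(y)) = u^2/4 + O(u^3)$, so $\la(1+) = -1/3$. Equivalently, Proposition~\ref{prop:} may be applied twice, since both $y - l^2$ and $(l-1)(y-l)$ vanish to order $2$ at $y = 1$ and the leading coefficients give the limit.

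The monotonicity is the substantive work. My first move is to rewrite $\la$ in a form amenable to differentiation: from $y - l^2 = (y - l) - l(l - 1)$ one has
\[
\la(y) = \frac{1}{l - 1} - \frac{l}{y - l}.
\]
Differentiating and collecting over a common denominator, using the two identities $l'(y) = (y-l)/(y \ln y)$ and $l(y) - y l'(y) = (l(y)-1)/\ln y$ (both direct consequences of $l = (y-1)/\ln y$), one arrives at
\[
\la'(y) = \frac{y(l-1)^3 - (y-l)^3}{y\,(\ln y)\,(l-1)^2 (y-l)^2}.
\]
The denominator is positive on $(1,\infty)$, so $\la'(y) > 0$ is equivalent to $y(l-1)^3 > (y-l)^3$, i.e.\ (taking cube roots, valid since both sides are positive) $y^{1/3}(l-1) > y - l$.

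To verify this clean inequality, I would substitute $w := y^{1/3}$, so $y = w^3$, $\ln y = 3 \ln w$, and $l = (w^3 - 1)/(3 \ln w)$. Multiplying through by $\ln y > 0$ turns the inequality into $\psi(w) > 0$ for $w > 1$, where
\[
\psi(w) := (w^2 - 1)(w^2 + w + 1) - 3w(w^2 + 1) \ln w.
\]
Five differentiations give $\psi(1) = \psi'(1) = \psi''(1) = \psi'''(1) = \psi^{(4)}(1) = 0$ and $\psi^{(5)}(w) = 18(w^2 + 1)/w^4 > 0$ on $(0,\infty)$. Back-integrating from $w = 1$ yields, in turn, $\psi^{(4)} > 0$, $\psi''' > 0$, $\psi'' > 0$, $\psi' > 0$, and finally $\psi > 0$ on $(1, \infty)$.

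The main obstacle is the algebraic reduction turning $\la'(y) > 0$ into the clean cube-root form; the identity $l - y l' = (l-1)/\ln y$ is the non-obvious step that makes this possible. Once it is in hand, the substitution $w = y^{1/3}$ produces an inequality whose first non-vanishing derivative at $w = 1$ is exactly the fifth, so the iterated back-integration closes cleanly.
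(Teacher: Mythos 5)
Your proposal does not prove the statement in question. The statement is Proposition~\ref{prop:}, the general l'Hospital-type rule for monotonicity: for \emph{arbitrary} differentiable $f,g$ on $(A,B)$ with $f(A+)=g(A+)=0$, with $g$ and $g'$ nonvanishing and of constant sign, and with $f'/g'$ increasing, the ratio $f/g$ is increasing. Nothing in your write-up addresses this claim: every line of your argument concerns the specific function $\la(y)=\bigl(y-l(y)^2\bigr)/\bigl((l(y)-1)(y-l(y))\bigr)$ with $l(y)=(y-1)/\ln y$, i.e.\ it is a proof of Lemma~\ref{lem:2}, not of Proposition~\ref{prop:}. The Proposition needs a short general argument, e.g.: since $g(A+)=0$ and $g'$ has constant sign, $g$ and $g'$ have the same sign, so $gg'>0$; by the Cauchy mean value theorem, for $A<t<x<B$ one has $\bigl(f(x)-f(t)\bigr)/\bigl(g(x)-g(t)\bigr)=f'(\xi)/g'(\xi)<f'(x)/g'(x)$ for some $\xi\in(t,x)$, and letting $t\downarrow A$ gives $f(x)/g(x)\le f'(x)/g'(x)$, whence $(f/g)'=\frac{gg'}{g^2}\bigl(\frac{f'}{g'}-\frac fg\bigr)\ge0$; strictness follows because $f/g$ cannot be constant on a subinterval (there $f/g$ would coincide with the increasing function $f'/g'$). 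Note that the paper itself does not reprove the Proposition but cites \cite[Proposition~4.1]{pin06}, so there is no in-paper proof to compare against.

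That said, what you actually wrote is a correct and self-contained proof of Lemma~\ref{lem:2}, and it takes a genuinely different route from the paper's: instead of applying Proposition~\ref{prop:} three times to successive derivative ratios, you differentiate $\la$ directly via $\la=\frac1{l-1}-\frac{l}{y-l}$ and the identities $l'=(y-l)/(y\ln y)$ and $l-yl'=(l-1)/\ln y$, reduce $\la'>0$ to $y^{1/3}(l-1)>y-l$ (legitimate, since $l-1>0$ and $y-l>0$ by the geometric--logarithmic--arithmetic mean inequalities), and verify the resulting inequality $\psi(w)>0$ for $w=y^{1/3}>1$ by checking that $\psi$ and its first four derivatives vanish at $w=1$ while $\psi^{(5)}(w)=18(w^2+1)/w^4>0$; I have checked these computations and they are right, as is the Taylor expansion giving $\la(1+)=-1/3$. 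Your approach buys independence from the external monotonicity rule for this particular application, at the cost of a heavier computation; the paper's approach keeps the computation minimal but rests on the Proposition --- which, again, your submission leaves unproved.
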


\begin{proof}[Proof of Lemma~\ref{lem:2}]
Note that $\la=f/g$, where 
\begin{equation*}
	f(y):=\frac{y \ln^2 y-(y-1)^2}{y},\quad g(y):=\frac{(y-\ln y-1) (y \ln y-y+1)}{y}; 
\end{equation*}
everywhere in this proof, $y$ is an arbitrary real number $>1$. Note also that $f(1+)=g(1+)=0$. Next, here  the ``derivative ratio'' is 
\begin{equation*}
	\frac{f'(y)}{g'(y)}=\frac{f_1(y)}{g_1(y)},
\end{equation*}
where 
\begin{equation*}
	f_1(y):=yf'(y)=\frac1y-y+2\ln y,\quad g_1(y):=yg'(y)=\frac{(y-1)^2\ln y}{y}.  
\end{equation*}
We have $f_1(1+)=g_1(1+)=0$. Next, the ``derivative ratio'' for $f_1/g_1$ is 
\begin{equation*}
	\frac{f_1'(y)}{g_1'(y)}=\frac{f_2(y)}{g_2(y)},
\end{equation*}
where 
\begin{equation*}
	f_2(y):=\frac{y^2}{y^2-1}\,f_1'(y)=\frac{1-y}{1+y},\quad 
	g_2(y):=\frac{y^2}{y^2-1}\,g_1'(y)=\ln y+\frac{y-1}{1+y}.  
\end{equation*}
We have $f_2(1+)=g_2(1+)=0$. 
Further, the ``derivative ratio'' for $f_2/g_2$ is 
\begin{equation*}
	r_3(y):=\frac{f_2'(y)}{g_2'(y)}=-\frac{2y}{1+4y+y^2},
\end{equation*}
whose derivative $2 (y^2-1)/(1 + 4 y + y^2)^2$ is $>0$, for real $y>1$. Applying now Proposition~\ref{prop:} three times, we see that $\la=f/g$ is indeed increasing. Moreover, applying the l'Hospital-type rule for limits three times, we see that $\la(1+)=r_3(1+)=-1/3$. Lemma~\ref{lem:2} is now proved. 
\end{proof}

This completes the proof of parts (I) and (II) of Theorem~\ref{th:1}. 

\begin{remark}
It follows from Lemma~\ref{lem:1} that 
\begin{equation}\label{eq:L<G}
	L(x,y)<\tilde G(x,y):=\sqrt{xy+\tfrac13\,(L(x,y)-x)(y-L(x,y))}
\end{equation}
whenever $0<x<y<\infty$, and the constant factor $\frac13$ here is optimal. This complements the logarithmic-geometric mean inequality $\sqrt{xy}<L(x,y)$ for distinct positive real $x,y$. Also, inequality \eqref{eq:L<G} represents an improvement of the arithmetic-logarithmic mean inequality $L(x,y)<\frac12\,(x+y)$. Indeed, one 
can show that 
$$\tilde G(x,y)<\tfrac12\,(x+y),$$ 
again whenever $0<x<y<\infty$. This can be done 
by a method similar to the one used in the proof of Lemma~\ref{lem:2}, but this time also utilizing the general l'Hospital-type rule for monotonicity given by \cite[Corollary~3.1]{pin06}. 
\end{remark}

\subsection{Proof of part (III) of Theorem~\ref{th:1}} \label{part III}
Take any $c\in(-1/3,0)$. Then, by \eqref{eq:p(a)}, $p((-c)+)=1$, whereas $p(a)<1$ for real $a>-c$. So, $p(a)=\P(X_a-a>c)$ is not increasing in $a$ in any right neighborhood of $0$. 

%Let $a\downarrow-c\in(0,1/3)$. Then, by \eqref{eq:p(a)}, 
%\begin{equation*}
%	p'(a)\Ga(a)^2=\Ga(a)\Big[-(a+c)^{a-1}+\int_{a+c}^\infty x^{a-1}\ln x\,e^{-x}\,dx\Big]
%	-\Ga'(a)\Ga(a,a+c)\to-\infty, 
%\end{equation*}
%because $(a+c)^{a-1}\to\infty$. So, $p(a)=\P(X_a-a>c)$ is decreasing in $a$ in a right neighborhood of $0$. 

To complete the proof of part (III) of Theorem~\ref{th:1}, it suffices to show that $p(a+1)>p(a)$ for all large enough $a>0$. Recalling \eqref{eq:p(a)} again and then using integration by parts in the integral expression for $\Ga(a+1,a+1+c)$, we have 
\begin{align*}
	&\Ga(a+1)(p(a+1)-p(a)) \\ 
	&=\Ga(a+1,a+1+c)-a\Ga(a,a+c) \\ 
	&=(a+c+1)^a e^{-a-c-1}-a\int_{a+c}^{a+c+1}x^{a-1}e^{-x}\,dx \\  
	&=(a+c+1)^a \Big(e^{-a-c-1}-a\int_{1-1/(a+c+1)}^1 u^{a-1}e^{-(a+c+1)u}\,du\Big).   
\end{align*}
%the second equality here was obtained by integration by parts. 
So, letting $a\to\infty$, 
\begin{equation}\label{eq:b,ep}
	b:=c+1\in(2/3,1),\quad%\text{and}\quad 
	\vp:=\frac1{a+b}(\downarrow0), 
\end{equation}
and using the substitution $z=(a+b)v$, we get $v=\vp z$, $a=1/\vp-b$, and 
\begin{align*}
	(p(a+1)-p(a))\,\frac{\Ga(a+1)}{(a+c+1)^a e^{-a-c-1}} 
	&=1-a\int_0^{1/(a+b)} (1-v)^{a-1}e^{(a+b)v}\,dv \\ 
	&=\int_0^1 g(\vp,z)\,dz, 
\end{align*}
where 
\begin{align*}
	g(\vp,z)&:=1-(1-b\vp)(1-z\vp)^{1/\vp-b-1}e^z \\ 
	&=1-\exp\{(z-b+bz-z^2/2)\vp+O(\vp^2)\} \\ 
	&=-(z-b+bz-z^2/2)\vp+O(\vp^2);
\end{align*}
everywhere here, the constant factors in the $O(\cdot)$'s are universal. So, 
\begin{equation*}
	2\int_0^1 g(\vp,z)\,dz=(b-2/3)\vp+O(\vp^2)>0
\end{equation*}
for all small enough $\vp>0$, in view of \eqref{eq:b,ep}. 
Thus, indeed $p(a+1)>p(a)$ for all large enough $a>0$. 

This completes the proof of part (III) of Theorem~\ref{th:1}, and thereby the entire proof of Theorem~\ref{th:1}.

\bibliographystyle{model2-names}
%\bibliography{SPL}
 
% BibTeX users please use one of
%\bibliographystyle{spbasic}      % basic style, author-year citations
%\bibliographystyle{amsplain}      % mathematics and physical sciences
%\bibliographystyle{spphys}       % APS-like style for physics
%\bibliography{}   % name your BibTeX data base
%
%\bibliographystyle{abbrv}
%%\bibliographystyle{amsplain}

\bibliography{P:/pCloudSync/mtu_pCloud_02-02-17/bib_files/citations01-09-20}

\def\cprime{$'$} \def\polhk#1{\setbox0=\hbox{#1}{\ooalign{\hidewidth
  \lower1.5ex\hbox{`}\hidewidth\crcr\unhbox0}}}
  \def\polhk#1{\setbox0=\hbox{#1}{\ooalign{\hidewidth
  \lower1.5ex\hbox{`}\hidewidth\crcr\unhbox0}}}
  \def\polhk#1{\setbox0=\hbox{#1}{\ooalign{\hidewidth
  \lower1.5ex\hbox{`}\hidewidth\crcr\unhbox0}}} \def\cprime{$'$}
  \def\polhk#1{\setbox0=\hbox{#1}{\ooalign{\hidewidth
  \lower1.5ex\hbox{`}\hidewidth\crcr\unhbox0}}} \def\cprime{$'$}
  \def\polhk#1{\setbox0=\hbox{#1}{\ooalign{\hidewidth
  \lower1.5ex\hbox{`}\hidewidth\crcr\unhbox0}}} \def\cprime{$'$}
  \def\cprime{$'$}
\begin{thebibliography}{14}
\expandafter\ifx\csname natexlab\endcsname\relax\def\natexlab#1{#1}\fi
\providecommand{\url}[1]{\texttt{#1}}
\providecommand{\href}[2]{#2}
\providecommand{\path}[1]{#1}
\providecommand{\DOIprefix}{doi:}
\providecommand{\ArXivprefix}{arXiv:}
\providecommand{\URLprefix}{URL: }
\providecommand{\Pubmedprefix}{pmid:}
\providecommand{\doi}[1]{\href{http://dx.doi.org/#1}{\path{#1}}}
\providecommand{\Pubmed}[1]{\href{pmid:#1}{\path{#1}}}
\providecommand{\bibinfo}[2]{#2}
\ifx\xfnm\relax \def\xfnm[#1]{\unskip,\space#1}\fi
%Type = Article
\bibitem[{Adell and Jodr\'{a}(2005)}]{adell-jodra}
\bibinfo{author}{Adell, J.A.}, \bibinfo{author}{Jodr\'{a}, P.},
  \bibinfo{year}{2005}.
\newblock \bibinfo{title}{Sharp estimates for the median of the
  {$\Gamma(n+1,1)$} distribution}.
\newblock \bibinfo{journal}{Statist. Probab. Lett.} \bibinfo{volume}{71},
  \bibinfo{pages}{185--191}.
\newblock \URLprefix \url{https://doi.org/10.1016/j.spl.2004.10.025},
  \DOIprefix\doi{10.1016/j.spl.2004.10.025}.
%Type = Article
\bibitem[{Alm(2003)}]{alm}
\bibinfo{author}{Alm, S.E.}, \bibinfo{year}{2003}.
\newblock \bibinfo{title}{Monotonicity of the difference between median and
  mean of gamma distributions and of a related {R}amanujan sequence}.
\newblock \bibinfo{journal}{Bernoulli} \bibinfo{volume}{9},
  \bibinfo{pages}{351--371}.
\newblock \URLprefix \url{https://doi.org/10.3150/bj/1068128981},
  \DOIprefix\doi{10.3150/bj/1068128981}.
%Type = Article
\bibitem[{Alzer(1997)}]{alzer97}
\bibinfo{author}{Alzer, H.}, \bibinfo{year}{1997}.
\newblock \bibinfo{title}{On some inequalities for the incomplete gamma
  function}.
\newblock \bibinfo{journal}{Math. Comp.} \bibinfo{volume}{66},
  \bibinfo{pages}{771--778}.
\newblock \URLprefix \url{https://doi.org/10.1090/S0025-5718-97-00814-4},
  \DOIprefix\doi{10.1090/S0025-5718-97-00814-4}.
%Type = Article
\bibitem[{Alzer(2005)}]{alzer05}
\bibinfo{author}{Alzer, H.}, \bibinfo{year}{2005}.
\newblock \bibinfo{title}{Proof of the {C}hen-{R}ubin conjecture}.
\newblock \bibinfo{journal}{Proc. Roy. Soc. Edinburgh Sect. A}
  \bibinfo{volume}{135}, \bibinfo{pages}{677--688}.
\newblock \URLprefix \url{https://doi.org/10.1017/S0308210500004066},
  \DOIprefix\doi{10.1017/S0308210500004066}.
%Type = Inproceedings
\bibitem[{Anderson and Samuels(1967)}]{anderson-samuels67}
\bibinfo{author}{Anderson, T.W.}, \bibinfo{author}{Samuels, S.M.},
  \bibinfo{year}{1967}.
\newblock \bibinfo{title}{Some inequalities among binomial and {P}oisson
  probabilities}, in: \bibinfo{booktitle}{Proceedings of the Fifth Berkeley
  Symposium on Mathematical Statistics and Probability, Volume 1: Statistics},
  \bibinfo{publisher}{University of California Press},
  \bibinfo{address}{Berkeley, Calif.}. pp. \bibinfo{pages}{1--12}.
\newblock \URLprefix \url{https://projecteuclid.org/euclid.bsmsp/1200512976}.
%Type = Article
\bibitem[{Chen and Rubin(1986)}]{chen-rubin}
\bibinfo{author}{Chen, J.}, \bibinfo{author}{Rubin, H.}, \bibinfo{year}{1986}.
\newblock \bibinfo{title}{Bounds for the difference between median and mean of
  gamma and {P}oisson distributions}.
\newblock \bibinfo{journal}{Statist. Probab. Lett.} \bibinfo{volume}{4},
  \bibinfo{pages}{281--283}.
\newblock \URLprefix \url{https://doi.org/10.1016/0167-7152(86)90044-1},
  \DOIprefix\doi{10.1016/0167-7152(86)90044-1}.
%Type = Article
\bibitem[{Choi(1994)}]{choi}
\bibinfo{author}{Choi, K.P.}, \bibinfo{year}{1994}.
\newblock \bibinfo{title}{On the medians of gamma distributions and an equation
  of {R}amanujan}.
\newblock \bibinfo{journal}{Proc. Amer. Math. Soc.} \bibinfo{volume}{121},
  \bibinfo{pages}{245--251}.
\newblock \URLprefix \url{https://doi.org/10.2307/2160389},
  \DOIprefix\doi{10.2307/2160389}.
%Type = Article
\bibitem[{Chojnacki(2008)}]{chojnacki}
\bibinfo{author}{Chojnacki, W.}, \bibinfo{year}{2008}.
\newblock \bibinfo{title}{Some monotonicity and limit results for the
  regularised incomplete gamma function}.
\newblock \bibinfo{journal}{Ann. Polon. Math.} \bibinfo{volume}{94},
  \bibinfo{pages}{283--291}.
\newblock \URLprefix \url{https://doi.org/10.4064/ap94-3-7},
  \DOIprefix\doi{10.4064/ap94-3-7}.
%Type = Article
\bibitem[{Corless et~al.(1996)Corless, Gonnet, Hare, Jeffrey and
  Knuth}]{knuth96}
\bibinfo{author}{Corless, R.M.}, \bibinfo{author}{Gonnet, G.H.},
  \bibinfo{author}{Hare, D.E.G.}, \bibinfo{author}{Jeffrey, D.J.},
  \bibinfo{author}{Knuth, D.E.}, \bibinfo{year}{1996}.
\newblock \bibinfo{title}{On the {L}ambert {$W$} function}.
\newblock \bibinfo{journal}{Adv. Comput. Math.} \bibinfo{volume}{5},
  \bibinfo{pages}{329--359}.
\newblock \URLprefix \url{http://dx.doi.org/10.1007/BF02124750},
  \DOIprefix\doi{10.1007/BF02124750}.
%Type = Article
\bibitem[{Hoeffding(1956)}]{hoeff56}
\bibinfo{author}{Hoeffding, W.}, \bibinfo{year}{1956}.
\newblock \bibinfo{title}{On the distribution of the number of successes in
  independent trials}.
\newblock \bibinfo{journal}{Ann. Math. Statist.} \bibinfo{volume}{27},
  \bibinfo{pages}{713--721}.
\newblock \URLprefix \url{https://doi.org/10.1214/aoms/1177728178},
  \DOIprefix\doi{10.1214/aoms/1177728178}.
%Type = Article
\bibitem[{Lin(1974)}]{lin74}
\bibinfo{author}{Lin, T.P.}, \bibinfo{year}{1974}.
\newblock \bibinfo{title}{The power mean and the logarithmic mean}.
\newblock \bibinfo{journal}{Amer. Math. Monthly} \bibinfo{volume}{81},
  \bibinfo{pages}{879--883}.
\newblock \URLprefix \url{https://doi.org/10.2307/2319447},
  \DOIprefix\doi{10.2307/2319447}.
%Type = Article
\bibitem[{Pinelis(2006)}]{pin06}
\bibinfo{author}{Pinelis, I.}, \bibinfo{year}{2006}.
\newblock \bibinfo{title}{On l'{H}ospital-type rules for monotonicity}.
\newblock \bibinfo{journal}{{JIPAM}. J. Inequal. Pure Appl. Math.}
  \bibinfo{volume}{7}, \bibinfo{pages}{Article 40, 19 pp. (electronic),
  \url{www.emis.de/journals/JIPAM/images/157_05_JIPAM/157_05.pdf}}.
%Type = Article
\bibitem[{Pinelis(2020)}]{incompl-gamma}
\bibinfo{author}{Pinelis, I.}, \bibinfo{year}{2020}.
\newblock \bibinfo{title}{Exact lower and upper bounds on the incomplete gamma
  function.} \bibinfo{note}{{a}rXiv:2005.06384 [math.CA], to appear in
  Mathematical Inequalities \& Applications}.
%Type = Article
\bibitem[{Vietoris(1983)}]{vietoris}
\bibinfo{author}{Vietoris, L.}, \bibinfo{year}{1983}.
\newblock \bibinfo{title}{Dritter {B}eweis der die unvollst\"{a}ndige
  {G}ammafunktion betreffenden {L}ochsschen {U}ngleichungen}.
\newblock \bibinfo{journal}{\"{O}sterreich. Akad. Wiss. Math.-Natur. Kl.
  Sitzungsber. II} \bibinfo{volume}{192}, \bibinfo{pages}{83--91}.

\end{thebibliography}

\end{document}